\def\G{\mathcal{G}}
\def\g{\tiny{\cal G}}
\def\V{\mathcal{V}}
\def\E{\mathcal{E}}
\def\I{\mathcal{I}}
\def\O{\mathcal{O}}
\def\T{\mathcal{T}}
\def\P{\phi}
\def\tr{{\mbox{\bf trace}}}
\def\dg{{\mbox{\bf deg}}}
\def\diag{{\mbox{\bf diag}}}
\newtheorem{theorem}{Theorem}[section]
\newtheorem{lemma}[theorem]{Lemma}
\newtheorem{definition}[theorem]{Definition}
\newtheorem{example}[theorem]{Example}
\newcommand{\bea}{\begin{eqnarray}}
\newcommand{\eea}{\end{eqnarray}}
\newcommand{\beas}{\begin{eqnarray*}}
\newcommand{\eeas}{\end{eqnarray*}}
\newcommand{\leftm}{\left[\begin{array}}
\newcommand{\rightm}{\end{array}\right]}
\begin{document}
\title{A Sieve Method for Consensus-type\\Network Tomography\thanks{The research of the authors was supported by AFOSR grant FA9550-09-1-0091.}}
\author{Marzieh Nabi-Abdolyousefi and Mehran Mesbahi\thanks{The authors are with the Department of Aeronautics and Astronautics, University of Washington, Seattle, WA 98195-2400 USA (emails: {mnabi+mesbahi}@uw.edu)}}
\maketitle
\begin{abstract}
In this note, we examine the problem of identifying the interaction geometry
among a known number of agents, adopting a consensus-type
algorithm for their coordination.
The proposed identification process is facilitated by
introducing ``ports'' for stimulating a subset of network vertices
via an appropriately defined interface and observing the network's response
at another set of vertices.
It is first noted that under the assumption of
controllability and observability of corresponding steered-and-observed network,
the proposed procedure identifies a number of important features of the
network using the spectrum of the graph Laplacian.
We then proceed to use degree-based graph reconstruction methods
to propose a sieve method for further characterization of the underlying network.
An example demonstrates the application of the proposed method.
%
%
\end{abstract}
{\em Keywords:} Inverse problems, coordination algorithms, system identification, graph reconstruction.
\section{Introduction}
Physical sciences are often concerned with
inferring models and physical parameters from data.
Given a model for a physical phenomena, computing the data values
is often referred to as the {\em forward problem.}
On the other hand, in {\em inverse problems}, the objective is the construction,
validation, invalidation, or reconstruction of the {\em model} from a set of measurements
associated with the system.
%
%
%
%
Inverse problems arise in fields such as astronomy, geophysics,
medical imaging, remote sensing, ocean acoustic tomography, and non-destructive testing
\cite{Babaeizadeh+Brooks+Isaacson_2007,Marques+Drummond+Vasco_2003,Mueller+Siltanen+Isaacson_2002}.
%
Closer to the present work are the inverse problems
associated with electrical networks~\cite{Curtis+Morrow_2000},
and the celebrated ``Can one hear the shape of a drum?" which aims to characterize
a manifold via its spectra~\cite{kac_1966},
or more recently, ``Can one hear the shape of a graph?"~\cite{Gutkin_2001}.
%
In fact, in this paper, we address the inverse problem related to consensus-type
coordination algorithms.
%
Consensus-type algorithms have recently been employed
for analysis and synthesis of a host of distributed protocols and control strategies
in multi-agent systems, including, flocking, formation control, rendezvous,
and distributed estimation~\cite{Mehran+Magnus_2009}.

%
One of the key aspects of this class of protocols is
the strong dependency between the interaction and information-exchange
geometry among the multiple agents, on one hand, and
the dynamic properties that these systems exhibit, on the other.
%
Motivated by this dependency,
in our work, we consider the scenario where the interaction network is inside a ``black box,''
and that only certain ``boundary'' nodes in the network
can be influenced and subsequently observed.
The ``input'' boundary nodes are then used to stimulate the
network, whose response is subsequently observed at the ``output'' boundary nodes.
Using this setup, in our complementary work~\cite{Marzieh+Mehran+2010},
we have presented a node knockout procedure that aims to find the generating function
of the graph Laplacian from the observed input-output data.
Our focus in the present work, in the meantime,
is to reduce the search space for the identification of the network topology
by blending ideas from system identification, integer partitioning, and degree-based graph reconstruction.
%
%
The implicit contribution of our analysis is its ramifications for exact identification from
boundary nodes for networks  that have an embedded consensus-type algorithms for their operation,
including formation flying, distributed estimation, and mobile robotics.

Our notation and terminology are standard.\footnote{The main focus of this work is on undirected graphs. However, the
extension of some results to the directed case will be examined in subsequent works.}
%
We denote by $\G=(\V,\E)$ the undirected
simple graph with vertex set $\V$
and edge set $\E$, comprised of two-element subsets of $\V$; we use ``nodes'' or ``agents''
interchangeably with ``vertices.''
Two vertices $u,v \in \G$ are called adjacent if $\{u,v\} \in \E$.
For vertex $i$, $\dg\ i$ denotes the number of its adjacent vertices or neighbors.
The Laplacian matrix for the graph $\G$
is denoted by $L(\G)$.
%
Laplacian matrices are positive semi-definite whose spectrum will be ordered
as $0=\lambda_1(L(\G))\leq \lambda_2(L(\G)) \leq \ldots \leq \lambda_n(L(\G))$.
%
%
We use $\P_{\g}(s)$ to denote the characteristic polynomial of the graph Laplacian.
%
%
The cardinality of the set $\mathcal{H}$ will be denoted by $|\mathcal{H}|$;
${\cal O}(f(n))$ on the other hand denotes a function of $n$ that is bounded by some constant multiple of $f(n)$
for large values of $n$.
\section{Network Identification}\label{system_ID}
%
%
%
%
%
%
Consider the consensus protocol 
adopted by $n$-nodes, where $x_i$ is the state of the $i$-{th} node, e.g., its position, speed, heading, voltage, etc.,
evolves according to the sum of the differences between the
$i$-th node's state and its neighbors.
%
%
Next, let a group of agents $\I \subset \V$ with cardinality $|\I|=r_{\I}$,
``excite'' the underlying coordination protocol by injecting signals to the network,
with another set of agents $\O\in \V$, of cardinality $|\O|=r_\O$,
measuring the corresponding network response.
Hence, the original consensus protocol from node $i$'s perspective
assumes the form 
\begin{eqnarray}
\dot{x_i}(t)=\sum_{ \{i,j\} \in \, \E} (x_j(t)-x_i(t))+B_iu_i(t), 
\end{eqnarray}
where $B_i=\beta_i$ if $i\in \I$, and zero otherwise.
Without loss of generality, we can always assume that $\beta_i=1$ and
modify the control signal $u_i(t)$ as $\beta_i u_i(t)$ if necessary.
Adding the observation ports to this ``steered'' consensus,
and having $y_j(t)=x_j(t)$ when $j \in \O$,
we arrive at the compact form of an input-output linear time-invariant
system, 
%
\begin{equation}
\dot{x}(t)=A(\G)x(t)+Bu(t), \quad  y(t) =Cx(t),  \label{excited_agreement}
\end{equation}
where $A(\G)=-L(\G) \in \mathbf{R}^{n\times n}$,
$B \in \mathbf{R}^{n\times r_\I}$, and
$C \in \mathbf{R}^{r_{\O} \times n}$. 
%

%
Even though in general sets $\I$ and $\O$ can be distinct and contain more than one element,
for the convenience of our presentation, we will assume that they are
identical- and at times, assume that the resulting input-output system is in fact SISO.
%
The extension of the presented results to the case when $\I$ and $\O$ are
distinct will be discussed after introducing the basic setup and approach.
%
%

We now pose the {\em inverse problem} of graph-based coordination algorithms,
namely, the feasibility of identifying the spectral and structural
properties of the underlying network $\G$ via the data facilitated by the input-output ports
$\I$ and $\O$.
%
%
In order to implement this program, however,
we need to assume that: (1) the identification procedure has knowledge of the number
of agents in the network, and (2) the input/output sets $\I$ and $\O$
have been chosen such that the system described in (\ref{excited_agreement}) is
controllable and observable.
Although the first assumption is reasonable in general,
the second one requires more justification which we now provide.
In the trivial case when $\I=\V$ and $B$ is equal to the identity matrix,
the input-output consensus (\ref{excited_agreement})
is controllable and by duality, observable.
%
%
%
%
However, more generally, the controllability/observability of the network
from a subset of its boundary nodes, is less trivial, and more to the point,
not guaranteed for general graphs~\cite{Mehran+Magnus_2009}.
In the meantime, since we will need controllability and observability of the network
for its identifiability, we will rely on a topical conjecture in the algebraic graph theory community 
to the effect
that for large values of $n$, the ratio of graphs with $n$ nodes that are not controllable from any single node
to the total number of graphs on $n$ nodes approaches zero as $n \rightarrow \infty$~\cite{Godsil_conjecture_2009};
this phenomena is depicted in Fig.~\ref{input_output_exm}.
%
%
{\em In the present paper, we take the controllability and the observability
of the underlying graph from the input  and output nodes as our working assumption.}
In the meantime it is always convenient to know when the network is uncontrollable from a given node.
\begin{lemma}\label{controllability_check}
Let $G(s)=C(sI-A)^{-1}B$ as the input-output realization of (\ref{excited_agreement}).
The uncontrollable/unobservable eigenvalues of (\ref{excited_agreement}) will not appear in the
corresponding entry of $G(s)$. Specifically, $G(s)$ will be order $n-i$ polynomial for
the SISO case with $n$ agents and $i$ uncontrollable/unobservable eigenvalues.
\end{lemma}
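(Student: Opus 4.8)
\emph{Proof plan.} The plan is to establish this classical fact (a realization's transfer function sees only its controllable-and-observable part) in a form tailored to $A(\G)=-L(\G)$, exploiting that this matrix is symmetric so that the resolvent has an explicit spectral, hence partial-fraction, expansion. Writing $A(\G)=\sum_k \mu_k P_k$ with the $\mu_k$ the distinct eigenvalues (so that $\{-\mu_k\}$ are the distinct values among the $\lambda_j(L(\G))$) and $P_k$ the orthogonal projector onto the $\mu_k$-eigenspace, one gets $(sI-A(\G))^{-1}=\sum_k (s-\mu_k)^{-1}P_k$ and hence $G(s)=\sum_k (s-\mu_k)^{-1}\,CP_kB$. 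Thus $G(s)$ is strictly proper with only simple poles, and $\mu_k$ is a pole of $G(s)$ (of the $(\ell,m)$ entry, in the MIMO case) exactly when $CP_kB\neq 0$ (resp. its $(\ell,m)$ entry is nonzero); in the SISO case $CP_kB$ is the single scalar $c^{T}P_kb$.

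Next I would invoke the PBH/Kalman machinery to translate "$CP_kB=0$'' into controllability/observability language: if $\mu_k$ is an uncontrollable mode of $(A(\G),B)$ then $\mathrm{rank}[\,\mu_k I-A(\G)\ \ B\,]<n$, which by symmetry of $A(\G)$ (so that the range of $\mu_k I-A(\G)$ is the orthogonal complement of the $\mu_k$-eigenspace) forces $P_kB$ to be rank-deficient relative to the multiplicity of $\mu_k$; dually for unobservability and $CP_k$. For the SISO case this is sharp: a repeated eigenvalue of $A(\G)$ is automatically uncontrollable and unobservable from a single port, while a simple eigenvalue $\mu_k$ has $c^{T}P_kb=0$ precisely when it is uncontrollable or unobservable. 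Hence the surviving poles of $G(s)$ are exactly the controllable-and-observable eigenvalues, all simple, and there are $n-i$ of them, with numerator degree at most $n-i-1$. Equivalently, $G(s)$ is the transfer function of the $(n-i)$-dimensional controllable-and-observable block of the Kalman decomposition of $(A(\G),B,C)$, so its pole polynomial has degree $n-i$ and divides $\det(sI-A(\G))$, with the missing factors corresponding to the uncontrollable/unobservable modes; this is the first assertion.

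The step needing the most care is the bookkeeping of multiplicities and the identification of $i$ with $n-\deg(\text{pole polynomial of }G)$: the scalar-residue argument cleanly shows uncontrollable/unobservable modes cannot appear, but establishing that \emph{no further} cancellations occur — so that the degree drop is exactly $i$ — is really the content of the Kalman decomposition, which I would cite rather than re-derive. I would present the symmetric spectral computation as the main line, since it is constructive and ties $G(s)$ directly to the Laplacian spectrum exploited in the sequel; the directed-graph version mentioned in the footnote follows identically after replacing the orthogonal spectral decomposition by the Kalman canonical form.
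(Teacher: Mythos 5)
Your proposal is correct and follows essentially the same route as the paper: orthogonal diagonalization of the symmetric $A(\G)$ (your spectral projectors $P_k$ are just the grouped columns of the paper's eigenvector matrix $U$) combined with the PBH test to conclude that uncontrollable/unobservable modes cancel from $C(sI-A)^{-1}B$. You are in fact more careful than the paper about the converse direction --- that no \emph{additional} cancellations occur, so the pole polynomial has degree exactly $n-i$ --- which the paper asserts without argument and you correctly attribute to the Kalman decomposition.
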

\begin{proof}
Since the underlying graph is undirected, matrix $A(\G)$ is symmetric and there exists a unitary matrix $U$ and a real
nonnegative diagonal matrix $\Lambda=\diag(\lambda_1, \ldots, \lambda_n)$
such that $A(\G)=U\Lambda U^T$. The columns of $U$ are an orthonormal set of eigenvectors for $A(\G)$.
The corresponding diagonal entries of $\Lambda$ are the
eigenvalues of $A(\G)$~\cite{Roger+Charles+1985}. Therefore,
\begin{equation}
G(s)=C(sI-A(\G))^{-1}B=C(sI-U\Lambda U^T)^{-1}B = CU (sI-\Lambda)^{-1}U^TB 
\end{equation}
From PBH test, if the system (\ref{excited_agreement}) is not controllable, there is an eigenvector that
is orthogonal to $B$. Therefore for an arbitrary uncontrollable eigenvalue $\lambda_i$, the
$i$-th row of $U^T$ is orthogonal to $B$ and $\lambda_i$ will not appear in $(sI-\Lambda)^{-1}U^TB$.
An analogous argument works for the unobservable case.
\end{proof}
\subsection{System Identification}
We now consider various standard system identification procedures in the context of identifying the
spectra of the underlying graph Laplacian, and subsequently, gaining insights into
the interconnection structure that underscores the agents' coordinated behavior.
%
%

System identification methods are implemented via
sampling of the system (\ref{excited_agreement}) at
discrete time instances,\footnote{The system identification methods work based on data sampling
from the system. Since we aimed to identify the interaction geometry of the network, we originally considered a continuous system. Therefore, we need to discretize the system (\ref{excited_agreement}).}
$\delta, 2\delta, \ldots, k\delta, \ldots$, with $\delta > 0$,
assuming the form
\begin{eqnarray}
z(k+1)  = A_{d}z(k)+B_{d} v(k), \quad w(k)  =  C_{d}z(k),\label{eq:discrete}
\end{eqnarray}
where $z(k)= x(k \delta)$, $v(k) = u(k \delta)$, $w(k) = y(k \delta)$, $A_{d}=e^{\delta A}$,
$B_{d}=\left(\int_0^\delta e^{At} dt \right) B$, and $C_{d}=C$.\footnote{The notation $e^{A}$ for a square
matrix $A$ refers to its matrix exponential.}
In fact, the system identification process leads to a realization of the model
\begin{eqnarray}
\widetilde{z}(k+1)=\widetilde{A}_d\widetilde{z}(k)+\widetilde{B}_du(k), \quad {\widetilde w}(k)=\widetilde{C}_d\widetilde{z}(k), \label{ident_disc}
\end{eqnarray}
where $(\widetilde{A}_d, \widetilde{B}_d, \widetilde{C}_{d})$ is the realization of $({A_d}, {B_d}, {C_d})$ in (\ref{eq:discrete}).
The estimated system (\ref{ident_disc}), on the other hand, is equivalent to the continuous-time system
\begin{eqnarray}
\dot{\widetilde{x}}(t)=\widetilde{A}\widetilde{x}(t)+\widetilde{B}u(k), \quad y(t)=\widetilde{C}\widetilde{x}(t), \label{ident_cont}
\end{eqnarray}
with
$\widetilde{A}_{d}=e^{\delta \widetilde{A}},$ 
$\widetilde{B}_{d}=\left(\int_0^\delta e^{\widetilde{A}t} dt \right) \widetilde{B},$ 
and $\widetilde{C}_{d}=\widetilde{C};$ 
\noindent in this case, $\widetilde{A}=(1/\delta) \log_M \widetilde{A}_{d}$ where $\log_M$ denotes the matrix logarithm.
Since the system (\ref{ident_disc}) is a realization of the system (\ref{eq:discrete}),
it follows that the estimated triplet $(\widetilde{A}, \widetilde{B}, \widetilde{C})$
is a realization of $(A, B, C)$ in (\ref{excited_agreement}).
As a result, there exists a similarity transformation induced by the matrix $T$, such that
$\widetilde{A}=TAT^{-1}$, $\widetilde{B}=TB$, and $\widetilde{C}=CT^{-1}$.
%
%
%
In fact, in the controllable/observable case, eigenvalues of $\widetilde{A}_d$ are precisely matched with the eigenvalues of $A_d$.
Obtaining a zero as eigenvalue of $\widetilde{A}_d$, which is equivalent of obtaining $-\infty$ as the eigenvalue of $\widetilde{A}$, is
a sign of uncontrollable and/or unobservable mode in (\ref{excited_agreement}).\footnote{This follows from Lemma~\ref{controllability_check} since $-\infty$ will appear as zero in the corresponding entries of $P(s)$.}
%
For example in the identification procedure called {\em Iterative Prediction-Error Minimization Method},
the model (\ref{eq:discrete}) for every input $v_i$ and output $w_j$ can be represented as
${\bf A}(q)w_j(k)={\bf B}(q)v_i(k)$, where ${\bf A}(q) = 1+a_{1}q^{-1}+\dots+a_{n}q^{-n}$
and ${\bf B}(q) = b_{1}q^{-1}+\dots+b_{r_{\I}}q^{-r_{\I}}$.
%
%
%
{The unknown model parameters $\theta= [a_{1}, \dots, a_{n}, b_{1}, \dots, b_{r_{\I}}]$
can then be estimated by comparing the actual output
$w_j(k)$ and predicted output $\widetilde{w}_{ji}(k|k-1)$ using the mean-square minimization.
In this case, the output predictor is constructed as
$\widetilde{w}_{ji}(k|k-1)=[-w_j(k-n), \ldots, -w_j(1), v_i(k-r_{\I}), \ldots, v_i(1)]$.}
In yet another candidate system identification procedure, namely the {\em Subspace Identification Method},
the system (\ref{eq:discrete}) is approximated by another system in the form (\ref{ident_disc}),
using the state trajectory of the dynamic system that has been determined from input-output observations.
The Hankel matrix, which can be constructed from the gathered input-output data,
plays an important role in this method. By constructing the Hankel matrix, the
discrete time system matrices $\widetilde{A}_{d}$, $\widetilde{B}_{d}$, and $\widetilde{C}_{d}$ can
then be determined.
Subsequently, the continuous-time estimated matrices $\widetilde{A}$, $\widetilde{B}$, and $\widetilde{C}$ can
be identified;
see~\cite{Ljung_1} for
an extensive treatment of system identification methods.
%

In summary, an identification procedure such as the above two methods, implemented on
a controllable and observable steered-and-observed coordination protocol (\ref{excited_agreement}), leads to
a system realization whose state matrix is {similar} to the underlying graph Laplacian and in particular
sharing the same spectra and characteristic polynomial.
However, a distinct and fundamental issue in our setup is that having found a
matrix that is ``similar'' to the Laplacian of a network is far from having exact knowledge of the network
structure itself \cite{Marzieh+Mehran+2010}.
This observation motivates the following question: {\em to what extend does the knowledge of the
spectra of the graph, combined with the knowledge of the input-output matrices,
reduce the search space for the underlying interaction geometry?}
In this note, we explore this question using techniques based on integer partitioning and
degree-based graph reconstruction.
\section{Graph Characterization}
We first review qualitative characterization of the
underlying interconnection topology via its identified characteristic polynomial.
We then explore the possibility of reducing the search space for the underlying
network via the proposed {\em sieve method}.
%
\subsection{Graph Characterization via the Characteristic Polynomial}
Recall that via a system identification method,
the characteristic equation of the system (\ref{excited_agreement})
can be found as 
\begin{eqnarray}
\P_{\g}(s) &=& \det(sI-A(\G)) = s^n+a_1s^{n-1}+...+a_{n-1}s+a_n.   \vspace{-0.15in}           \label{charactristic_en}
\end{eqnarray}
%
%
Although the spectra of the graph Laplacian in general is insufficient to
form an explicit characterization of the underlying network, it leads
to a number of useful structural information about its geometry; we list a few:
(1) the value $({1}/{n}) \prod_{i=2}^n \lambda_i(\G)$ is the number of spanning trees in $\G$,
(2) one has $|\E|=1/2 \sum_{i=1}^{n}\lambda_i $, where $|\E|$ is the number of edges in the graph,
(3) if $a_{n-1}=n$, the underlying interconnection is a tree.
For a tree, the coefficient $a_{n-2}$ is also called the graph Wiener index
(the sum of all distances between distinct vertices of $\G$) \cite{Mohar+1991},
(4) if the associated graph is a tree, $a_k$ is the number of $k$-matching in the subdivision of $\G$
(where each edge of $\G$ is replaced by a path of length 2), 
(5) if the eigenvalues of $L(\G)$ are distinct, with $0, \lambda_2, \ldots, \lambda_r$ where
$\lambda_r> \ldots > \lambda_2>0$, define $\psi_\G(x)=(x-\lambda_2) \ldots (x-\lambda_r)$.
It is then well-known that $\psi_{\G^c}=(-1)^r\psi_\G(n-x)$, where the graph $\G^c$ is the complement of the graph $\G$.
The Hoffman number of the graph, $\mu(\G)$, can also be found as $\lambda_2 \lambda_3 \ldots \lambda_r /n$ whose properties and
applications have been studied in \cite{Tranishi_2003},
(6) let $\T$ be a tree with $n \geq 2$ vertices. If $\T$ has only one positive Laplacian eigenvalue with multiplicity one, then $\T$ is the star $K_{1,n-1}$~\cite{Tranishi_2003},
%
(7) let $\G$ be a connected graph with exactly three distinct Laplacian eigenvalues. Then the algebraic connectivity (the second smallest Laplacian eigenvalue) of $\G$ is equal to one if and only if $\G$ is a star of $K_{1,n-1}$ with $n \geq 3$,
(8) if $\G$ is a connected graph with integer Laplacian spectra, then $d(\G) \leq 2 \kappa(\G)$, where $d(\G)$ is the diameter of the graph and $\kappa(\G)=(1/n)\prod_{i=2}^n \lambda_i(\G)$.

Although the spectra of the Laplacian provides important insights, as noted above,
into the structural properties of the network,
we now proceed to explore the possibility of complete identification of the underlying
network using its graph spectra complemented with a sieve method.
%
%
\subsection{Graph Sieve}
In this section, we provide an overview of the graph sieve procedure--
that in conjunction with the identified Laplacian spectra--
leads to a more confined search for the network in the black box.
The essential ideas involve the judicious use of
integer partitioning algorithms and degree-based graph reconstruction.

%
%
%
Recall that with the standing assumption of $C=B^T$ in (\ref{excited_agreement}),
for the identified system matrices $(\widetilde{C},\widetilde{A},\widetilde{B})$,
after appropriate relabeling, the product $\widetilde{C}\widetilde{A}\widetilde{B}= CAB$
leads to the first $r\times r$ block partition of the matrix $L(\G)$. Notice that if
$B\neq C^T$, we still obtain $r^2$ entries  of the matrix $L(\G)$ which may not contain the diagonal entries.
The product $\widetilde{C}\widetilde{A}\widetilde{B}$ gives us the degree of the nodes
that are in common in both input and output sets and information on the minimum degree of other nodes.
Since the eigenvalues of the identified matrix $\widetilde{A}$ are identical to those
of $L(\G)$,
%
%
as the result of the identification process,
we have access to the sum of the degrees of all nodes in the network,
as well as the degrees of a subset of $r$-boundary nodes.
Let us define $\widetilde{R}$ as the set of nodes in $r$-boundary nodes which appears
in both $\I$, and $\O$ with $|\widetilde{R}|=\tilde{r}$.
Moreover, let 
\begin{eqnarray}
r_d = \sum_{v \in \tilde{R}} \dg\ v \quad \mbox{and} \quad s = \tr(\widetilde{A})-r_d. \label{s_formulation}
\end{eqnarray}
%
%
If $B=C^T$, then $r_d=\tr(\widetilde{C} \widetilde{A}\widetilde{B})$, and
the set $\widetilde{R}$ will be equal to $r$-boundary nodes.
We can then proceed to determine the degrees of $n-\tilde{r}$ remaining nodes,
or equivalently, partition the positive integer $s$ (\ref{s_formulation})
into $n-\tilde{r}$ integers, each assuming a value between
1 and $n-1$, and a lower bound for the degrees of $r-\tilde{r}$ nodes~\cite{dickson_1971}.
The possible values for the partitioning comes from the fact that
we expect the resulting graph be connected while respecting the bounds
on the maximum allowable node degrees.

Partitioning integers without constraints on the resulting partition
is often referred to as {\em unrestricted partitions}.
Restricted partitions, on the other hand,
are those with constraints on the largest value of the partition that is no greater
than a value of $K_U$,  or no smaller than $K_L$, or both.
Algorithms that generate unrestricted partitions can often be used to generate the
restricted ones by certain modifications.
Several such algorithms, dealing with unrestricted and restricted integer partitioning,
have been suggested in literature.
%
%
%
In the context of the graph realization using the proposed system identification
method, we proceed to use the algorithms in~\cite{comb_alg_Reingold}, 
in order to generate different possible sets of $n-r$ integers
between 1 and $n-1$, and $r-\tilde{r}$ nodes with specified
lower bounds on their degree such that their sum is $s$ (\ref{s_formulation}).
%
\subsection{Integer Partitioning Algorithms and Complexity Analysis}
Consider a degree sequence $\{d_1, d_2, \dots, d_{n-\tilde{r}}\}$ while $d_1+d_2+\dots+d_{n-\tilde{r}} = s$, with specified lower bound on $r-\tilde{r}$ of them.
Without loss of generality, assume that the first $\{d_1, \dots, d_{r-\tilde{r}}\}$ degrees
are lower bounded as
\begin{equation}
d_i \geq L_i \quad \mbox{for} \quad i=1, \dots, r-\tilde{r}. \label{bound}
\end{equation}
We are interested to find all possible partitioning of $s$ into
$n-\tilde{r}$ integers between $1$ and $n-1$ satisfying (\ref{bound}).
We have the following observation; see \cite{comb_2000}.
%
\begin{lemma}
Let the number of partitioning of $s$ into $n-\tilde{r}$
integers between $1$ and $n-1$ be denoted by $P_{n-\tilde{r}}(s)$.
Then
\begin{equation}
P_{n-\tilde{r}}(s) = P_{n-\tilde{r}-1}(s-1)+(n-\tilde{r})P_{n-\tilde{r}}(s-1). \label{num_partitions}
\end{equation}
\end{lemma}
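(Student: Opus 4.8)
The plan is to prove the recurrence (\ref{num_partitions}) by a bijective decomposition of the set of admissible partitions, conditioning on the fate of a single distinguished unit of $s$. Write $m := n-\tilde{r}$, and view a ``partitioning of $s$ into $m$ integers between $1$ and $n-1$'' as an assignment of values $d_{1},\ldots,d_{m}$ with $1\le d_{i}\le n-1$ and $d_{1}+\cdots+d_{m}=s$ to the $m$ remaining nodes, which, being distinct vertices of $\G$, carry labels. I would split the admissible configurations for total $s$ into two disjoint classes: those in which the last allocated unit of $s$ creates a new part of value $1$, and those in which that unit is appended to a part that is already present.

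For the first class, deleting that singleton part yields an admissible partitioning of $s-1$ into $m-1$ parts between $1$ and $n-1$, and this operation is invertible; hence the class has cardinality $P_{m-1}(s-1)$. For the second class, removing the last unit from whichever part absorbed it leaves an admissible partitioning of $s-1$ into $m$ parts, each still between $1$ and $n-1$; conversely, every configuration counted by $P_{m}(s-1)$, together with a choice of one of the $m=n-\tilde{r}$ parts to augment, produces exactly one configuration of this class, so its cardinality is $(n-\tilde{r})\,P_{m}(s-1)$. Summing the two cardinalities yields (\ref{num_partitions}). As an independent check — and to fix the boundary behaviour — one can pass to generating functions: with $A_{m}(x):=\sum_{s}P_{m}(s)\,x^{s}$, multiplying (\ref{num_partitions}) by $x^{s}$ and summing gives the equivalent first-order relation $(1-mx)\,A_{m}(x)=x\,A_{m-1}(x)$, which telescopes from $A_{0}(x)=1$ to the closed form $A_{m}(x)=x^{m}\big/\prod_{j=1}^{m}(1-jx)$; comparing the coefficient of $x^{s}$ on either side of the first-order relation then reproduces (\ref{num_partitions}) directly.

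The step I expect to demand the most care is the bookkeeping at the edges of the recursion: the base cases $P_{m}(s)=0$ for $s<m$ and $P_{m}(m)=1$, and, more delicately, the interaction of the per-part cap $n-1$ and the lower bounds (\ref{bound}) with the two bijections above. The cap is inactive except when $s$ is near its extreme value $m(n-1)$, so for the values of $s$ that actually arise in the sieve, namely $s=\tr(\widetilde{A})-r_{d}$, augmenting a part never forces some $d_{i}>n-1$, and the second map is genuinely onto. Verifying this — equivalently, checking that the recursion is only invoked in the ``interior'' of the table of values $P_{m}(s)$, and arguing that the distinguished-unit decomposition is well defined once the parts are treated as labeled — is the one point that is not purely mechanical.
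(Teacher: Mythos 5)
Your decomposition does not establish the lemma as stated, and the gap is not the boundary bookkeeping you flag but the central bijection. An integer $s$ has no distinguishable units, so there is no well-defined ``last allocated unit'': the partition $1+3$ of $4$ into two parts lies simultaneously in your first class (delete the singleton part to get the partition $3$ of $3$ into one part) and in your second class (it arises from $1+2$ by augmenting the larger part), so the two classes are not disjoint, and the augmentation map from pairs (partition of $s-1$ into $m$ parts, choice of a part) to partitions of $s$ is many-to-one rather than bijective. Concretely, with $m=2$ and the cap inactive, $P_2(4)=2$ (namely $1+3$ and $2+2$) while the right-hand side of the recurrence gives $P_1(3)+2P_2(3)=1+2=3$. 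Treating the parts as labeled by the $m$ remaining nodes does not repair this: the ordered count is $\binom{s-1}{m-1}$, and $\binom{3}{1}=3\neq P_1(3)+2P_2(3)=1+2\cdot 2=5$ under that reading either. What your argument --- and your generating-function check $A_m(x)=x^m/\prod_{j=1}^m(1-jx)$ --- actually proves is the recurrence for the Stirling numbers of the second kind, $S(s,m)=S(s-1,m-1)+mS(s-1,m)$, which counts partitions of a set of $s$ \emph{distinguishable} elements into $m$ blocks; that is the only object for which the distinguished-element decomposition is legitimate, and it is not the quantity defined in the lemma.

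The paper gives no proof of this lemma (it is attributed to a combinatorics reference), so there is no argument of theirs to compare yours against; but rather than trying to force this decomposition through, you should flag that the displayed identity appears to conflate set partitions with integer partitions. The number of integer partitions of $s$ into exactly $m$ positive parts (before imposing the cap $n-1$) satisfies $p_m(s)=p_{m-1}(s-1)+p_m(s-m)$, obtained by splitting on whether the smallest part equals $1$ or all parts are at least $2$, with generating function $x^m/\prod_{j=1}^m(1-x^j)$; the cap $n-1$ and the lower bounds (\ref{bound}) would then have to be threaded through that recurrence, not through the Stirling-type one.
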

%
The following algorithm, proposed in \cite{comb_alg_Reingold}
finds all partitioning of $s$ into $m= n-\tilde{r}$ integers between $1$ and $n-1$
satisfying (\ref{bound}). The partitioning of $s$ with $m$ components can be generated in increasing lexicographic order by starting with $d_1=d_2=\ldots=d_{m-1}=1,\ \ d_m=s-m+1$ and continuing as follows. To obtain the next partition from the first one, scan the elements from right to left, stopping at the right most $d_i$ such that $d_m-d_i \geq 2$. Replace $d_j$ by $d_i+1$ for $j=i, i+1, \ldots, m-1$ and then replace $d_m$ by $s-\sum_{j=1}^{m-1}d_j$. For example, if we have $s=12, \ \ m=5,$ and the partition $\{1,1,3,3,4\}$, we find that $4$ is greater by $2$ than the rightmost $1$, and so the next partition is $\{1,2,2,2,5\}$. When no element of the partition differs from the last by more than $1$, we are done.
%


\begin{algorithm}
\caption{Integer Partitioning}
\label{int_part}
\SetAlgoLined
$d_1=d_2=\ldots=d_{m-1}=1,\ \ d_m=s-m+1$ \\ \; $i=1$ \\ \;
\While {$i \neq 0$} {
\If {$\{d_1, d_2, \ldots, d_{\tilde{r}}\} \geq \{L_1, L_2, \ldots, L_{\tilde{r}}\}$ and $1 \leq d_i \leq n-1,\ \forall i$}
{output $\{d_1, d_2, \ldots, d_m\}$}
$i=m-1$\\ \;
\While {$d_m-d_i < 2\ \ $}    {
$i=i-1$
}
\If {$i \neq 0\ \ $ } {
\For {$j=m-1$ to $i$ by -1} {
$d_j=d_i+1$
}
}
$d_m=s-\sum_{j=1}^{m-1}d_j$  \;
}
\end{algorithm}
In the suggested algorithm the output size of each partitioning of $s$
into some arbitrary number of integers $m$, $P_m(s)$, is ${\cal O}(s)$.
This means that the total output size is ${\cal O}(sP(s))$. The approximate size of the number $P(s)$ is provided by the following
asymptotic formula, 
\begin{equation}
P(s) \sim \frac{1}{4 \sqrt{3}} \exp{\left(\pi \sqrt{\frac{2s}{3}}\right)}. \vspace{-0.10in} \nonumber
\end{equation}

In other words, $P(s)$ grows faster than any polynomial, but slower than any exponential function $Q(s)=c^s$.
%
However, in our application, we are interested in a subset of $P(s)$ which has a
specified size and satisfy certain constraints.
Specifically, the integer $s$ in (\ref{s_formulation})
is approximately  the number of edges in the graph.
For simple graphs
if ${\cal O}(|\E|)={\cal O}(n)$, then the upper bound for the proposed partitioning is ${\cal O}(ne^{\sqrt{n}})$, and
if ${\cal O}(|\E|)={\cal O}(n^2)$, the upper bound for the proposed partitioning is ${\cal O}(n^2e^{n})$.

Although the algorithm above leads to a possible
degree sequence for the underlying graph--- consistent with the
identification procedure-- we need an additional set of conditions
for ensuring that the obtained sequence in fact corresponds to that of
a graph.
\begin{definition} 
A {graphical sequence} is a list of nonnegative numbers that
is the degree sequence of some simple graph. A simple graph with degree sequence $\mathbf{d}$ is
said to realize $\mathbf{d}$.
\end{definition}
%
Our next step is therefore to characterize the necessary and sufficient
conditions for a set of integers to be graphical.
For this, we resort to the following result.
\begin{theorem}\cite{Douglas_2000}
\label{simple_graph_cond}
For $n>1$, an integer list $\mathbf{d}$ of size $n$ is graphical if and only if $\mathbf{d}'$
is graphical, where $\mathbf{d}'$ is obtained from $\mathbf{d}$
by deleting its largest element $\Delta$ and subtracting $1$
from its $\Delta$-th next largest elements.
The only $1$-element graphical sequence is $\mathbf{d}_1=\{0\}.$
\end{theorem}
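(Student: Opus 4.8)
The statement is the classical Havel--Hakimi theorem, and the plan is to prove the two directions of the ``if and only if'' separately, with the forward (harder) direction relying on an edge-swapping argument. Throughout, sort $\mathbf{d}=(d_1,d_2,\dots,d_n)$ in nonincreasing order so that $\Delta=d_1$, and let $\mathbf{d}'$ be the sequence $(d_2-1,d_3-1,\dots,d_{\Delta+1}-1,d_{\Delta+2},\dots,d_n)$ obtained by deleting $d_1$ and subtracting $1$ from the next $\Delta$ largest entries. The base case $n=1$ is immediate, since the only graph on one vertex has the isolated vertex with degree $0$.

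\emph{Easy direction (if $\mathbf{d}'$ is graphical, so is $\mathbf{d}$).} Suppose $H$ is a simple graph realizing $\mathbf{d}'$ on vertex set $\{v_2,\dots,v_n\}$, where $v_i$ has degree $d_i-1$ for $2\le i\le \Delta+1$ and degree $d_i$ for $i>\Delta+1$. I would add a new vertex $v_1$ and join it to exactly $v_2,\dots,v_{\Delta+1}$. This raises the degree of each of those $\Delta$ vertices by $1$, restoring them to $d_2,\dots,d_{\Delta+1}$, leaves the others untouched, and gives $v_1$ degree $\Delta=d_1$; the result is a simple graph realizing $\mathbf{d}$.

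\emph{Hard direction (if $\mathbf{d}$ is graphical, so is $\mathbf{d}'$).} This is the main obstacle. Let $G$ realize $\mathbf{d}$, with $v_1$ the vertex of (maximum) degree $\Delta$. If $v_1$ happens to be adjacent precisely to the vertices $v_2,\dots,v_{\Delta+1}$ of the next-largest degrees, then deleting $v_1$ yields a realization of $\mathbf{d}'$ and we are done. In general this need not hold, so I would argue that among all realizations of $\mathbf{d}$ we may choose one in which it \emph{does} hold. Concretely, pick a realization $G$ maximizing $\sum_{j=2}^{\Delta+1}[v_1v_j\in E(G)]$ (equivalently, maximizing the overlap of $N(v_1)$ with the top-$\Delta$ degree vertices), and suppose for contradiction that $v_1$ is not adjacent to some $v_j$ with $j\le\Delta+1$ but is adjacent to some $v_k$ with $k\ge\Delta+2$. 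Since the sequence is sorted, $d_j\ge d_k$; if $d_j>d_k$ there must be a vertex $w$ with $wv_j\in E$ and $wv_k\notin E$, and if $d_j=d_k$ one can still locate such a $w$ (or relabel so that $v_j,v_k$ are interchangeable). Then the swap that deletes edges $v_1v_k$ and $wv_j$ and adds edges $v_1v_j$ and $wv_k$ preserves all degrees, keeps the graph simple, and strictly increases the overlap count --- contradicting maximality. Hence in the extremal realization $N(v_1)=\{v_2,\dots,v_{\Delta+1}\}$, and deleting $v_1$ produces a simple graph on $\{v_2,\dots,v_n\}$ whose degree sequence is exactly $\mathbf{d}'$. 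The delicate point to get right is the case analysis locating the witness vertex $w$ when ties in the degree sequence occur, and checking in each branch that the $2$-swap neither creates a multi-edge nor a loop; once that bookkeeping is done the induction closes.
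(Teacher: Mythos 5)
Your proposal is the standard Havel--Hakimi argument, and it is correct; note that the paper itself gives no proof of this statement but simply cites it from West's textbook \cite{Douglas_2000}, where essentially this same edge-swap proof appears. The one point you flag as delicate --- locating the witness $w$ when $d_j = d_k$ --- actually needs no case split: since $v_1$ is adjacent to $v_k$ but not to $v_j$, a direct count gives $|N(v_j)\setminus\{v_1,v_k\}| \geq d_j - 1 \geq d_k - 1 > |N(v_k)\setminus\{v_1,v_j\}|$ in every configuration, so a vertex $w \notin \{v_1,v_j,v_k\}$ with $wv_j \in E$ and $wv_k \notin E$ always exists and the $2$-swap is legal; with that observation your induction closes cleanly.
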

\begin{example}
Consider a sequence $\{3, 2, 2, 2, 2\}$. Since the number of odd degree
nodes is odd, the sequence is not graphical.
Let us also construct the $\mathbf{d}'$ sequences described above,
as $\{1, 1, 1, 2\}$ and $\{ 1 \}$, which again verifies that this sequence is not graphical.
\end{example}

Since the maximum number of steps to check whether a sequence is graphical or not is $n$, the complexity
of this algorithm is ${\cal O}(n)$. Given the particular algorithmic means of generating a graphical sequence for
the required integer partitions,
as detailed above, we now consider the problem of
constructing graphs based on a graphical sequence.
\subsection{Degree Based Graph Construction Algorithms and Complexity Analysis}
Before describing the algorithm, we need to provide a few definitions.\footnote{The necessary definitions and algorithms have been discussed in \cite{kim+zoltan+e.a._09} and are briefly described here to complement the presentation.}
Let $A(i)$ denote the adjacency set of node $i$ defined
as
$$A(i)=\{a_k \, | \, a_k\in \V, a_k\geq i, \; \mbox{for all} \; k, \ 1\leq k \leq d_i \}.$$
The reduced degree sequence ${\mathbf d}^{'}|_{A(i)}$ is obtained after removing node $i$ with all its edges from $\G$.
We now define the ordering $\leq$ between two adjacency sets
of node $i$,
$A(i) = \{\ldots, a_k, \ldots\}$ and $B(i)=\{\ldots, b_k, \ldots\}$,
as $B(i) \leq A(i)$
if we have $b_k\leq a_k$ for all $1 \leq k \leq d_i$.
In this case we also say that $B(i)$ is "to the left" of $A(i)$. 
The next lemma introduces a sufficient condition for the sequence ${\mathbf d}^{'}|_{B(i)}$ to be graphical.
\begin{lemma}~\cite{kim+zoltan+e.a._09}\label{lemma4}
Let $\mathbf{d}=\{d_1, d_2, \ldots, d_n\}$ be a non-increasing graphical sequence, and let $A(i)$, $B(i)$ be two adjacency sets for some node $i \in \V$, such that $B(i) \leq A(i)$.
If the degree sequence reduced by $A(i)$ (that is ${\mathbf d}^{'}|_{A(i)}$) is graphical,
then the degree sequence reduced by $B(i)$ (that is ${\mathbf d}^{'}|_{B(i)}$) is also graphical.
\end{lemma}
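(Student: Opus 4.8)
The plan is to recast the statement in terms of graph realizations and then establish it by a single edge switch, iterated along a chain of adjacency sets. First I would record the basic equivalence implicit in the definition of the reduced sequence: for an adjacency set $S(i)$ of node $i$, the reduced sequence ${\mathbf d}^{'}|_{S(i)}$ is graphical if and only if $\mathbf{d}$ admits a simple realization $G$ on $\V$ in which the neighborhood of $i$ is exactly $S(i)$ (delete $i$ in one direction; reattach $i$ to the vertices of $S(i)$ in the other). Consequently the lemma is equivalent to the following: if $\mathbf{d}$ has a realization with $N(i)=A(i)$ and $B(i)\le A(i)$, then $\mathbf{d}$ has a realization with $N(i)=B(i)$.

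Next I would reduce to the case in which $B(i)$ is obtained from $A(i)$ by lowering a single vertex. Writing the two adjacency sets in decreasing order, $A(i)=\{\alpha_1>\dots>\alpha_{d_i}\}$ and $B(i)=\{\beta_1>\dots>\beta_{d_i}\}$ with $\beta_k\le\alpha_k$ for all $k$, let $t$ be the largest index with $\alpha_t\neq\beta_t$. Since $\alpha_{t+1}<\beta_t<\alpha_t$ (with the obvious modification when $t=d_i$), the value $\beta_t$ is not already an element of $A(i)$, so $C(i):=(A(i)\setminus\{\alpha_t\})\cup\{\beta_t\}$ is again a legitimate adjacency set, it satisfies $B(i)\le C(i)\le A(i)$, and it agrees with $B(i)$ in one more coordinate than $A(i)$ does. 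Iterating produces a finite chain $A(i)=C_0,C_1,\dots,C_m=B(i)$ in which each $C_{j+1}$ arises from $C_j$ by replacing one vertex $a$ with a strictly smaller vertex $b\notin C_j\cup\{i\}$. It therefore suffices to prove the one-step claim and chain the implications.

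For the one-step claim, fix a realization $G$ of $\mathbf{d}$ with $N(i)=A(i)$, and suppose $a\in A(i)$ and $b<a$ with $b\notin A(i)\cup\{i\}$; then $\{i,a\}$ is an edge of $G$ while $\{i,b\}$ is not. I claim there is a vertex $w\in N(b)\setminus\bigl(N(a)\cup\{a\}\bigr)$. Indeed, if $N(b)\subseteq N(a)\cup\{a\}$, then because $i\in N(a)$ while $i\notin N(b)$ we would have $N(b)\subseteq(N(a)\cup\{a\})\setminus\{i\}$, a set of size $\dg\,a$; since $b<a$ and $\mathbf{d}$ is non-increasing, $\dg\,b\ge\dg\,a$, which forces $N(b)=(N(a)\setminus\{i\})\cup\{a\}$, so $\{a,b\}$ is an edge, hence $b\in N(a)$, and as $b\neq i$ we get $b\in N(a)\setminus\{i\}\subseteq N(b)$ --- impossible. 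Picking such a $w$ (note $w\notin\{i,a,b\}$), perform the two-switch that deletes $\{i,a\}$ and $\{b,w\}$ and adds $\{i,b\}$ and $\{a,w\}$: it keeps the graph simple, preserves every vertex degree, and yields a realization of $\mathbf{d}$ with $N(i)=(A(i)\setminus\{a\})\cup\{b\}$. Applying this step along $C_0,\dots,C_m$ and translating back through the equivalence of the first step proves the lemma.

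The step I expect to be the main obstacle is the one-step claim --- concretely, verifying that the alternating vertex $w$ always exists, i.e., that the switch can never get stuck; the short case analysis above is where the actual content sits, and it relies essentially on $i\in N(a)\setminus N(b)$ together with $\dg\,b\ge\dg\,a$. The chain-construction step is routine but does demand a bit of care to keep every intermediate set a genuine adjacency set (distinct vertices, all at least $i$, none equal to $i$) sandwiched between $B(i)$ and $A(i)$.
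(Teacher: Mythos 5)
Your proof is correct. The paper itself states this lemma without proof, importing it from \cite{kim+zoltan+e.a._09}; your argument --- recasting graphicality of ${\mathbf d}^{'}|_{S(i)}$ as the existence of a realization of $\mathbf{d}$ with $N(i)=S(i)$, descending from $A(i)$ to $B(i)$ one coordinate at a time through intermediate adjacency sets, and effecting each single-coordinate move by a degree-preserving two-switch whose alternating vertex $w$ exists precisely because $i\in N(a)\setminus N(b)$ and $\dg\ b\geq \dg\ a$ --- is essentially the standard swap-based proof from that reference, and the delicate points (the existence of $w$, and that each intermediate set remains a genuine adjacency set sandwiched between $B(i)$ and $A(i)$) are all handled correctly.
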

The above lemma guarantees preservation of ``graphicality''
for all adjacency sets to the left of a graphical one.
Now consider a graphical degree sequence $\mathbf{d}$ on $n$ nodes obtained
from previously discussed integer partitioning approach.
From the identity $\widetilde{C}\widetilde{A}\widetilde{B}=CAB$,
as we discussed in \S~\ref{system_ID}, $r_{\I} r_{\O}/{2}$ entries of the system matrix $A(\G)$ are known;\footnote{In the case where $C= B^T$, $r^2/2$ entries of $\widetilde{C}\widetilde{A}\widetilde{B}$ are known.}
define these set of edges as being ``pre-determined'' in the graph which cannot
be repeated again.
Put these connections in the forbidden set $X(\mathbf{d})$.
The Algorithm~\ref{degree_graph_const} describes how we can
construct all possible graphs avoiding the edges in $X(\mathbf{d})$.
\begin{algorithm}
Given a graphical sequence $\{d_1 \leq d_2 \leq \ldots \leq d_n \leq 1\}$.

\begin{itemize}
\item[I.]{Define the rightmost adjacency set $A_R(i)$ containing the $d_i$ largest index nodes different from $i$. Let us also define $X(i)=\{j \in \V, j\neq i\ \mbox{s.t.}\ \{i,j\}\notin \E \}$ as the forbidden neighbors of node $i$. Note that $X(i)$ originally might contain some nodes according the forbidden set $X(\mathbf{d})$. Create the set $A_R(1)$ and $X(1)$ for node $1:$ connect node $1$ to $n$ (this never breaks graphicality). Set $X(1)=\{n\}$. Define the new sequence ${\mathbf d}^{'}=\{d_1-|X(1)|, d_2, \ldots, d_n\}$.
Let $k=n-1$.}
\begin{itemize}
\item[I.1.] {Connect another edge
of $1$ to $k$. Run the graphicality test in Theorem~\ref{simple_graph_cond}.} \label{I.1}

\item[I.2.] If this test fails, set $k=k-1$; repeat I.1. 

\item[I.3.] If the test passes, keep (save) the connection, add the node $k$ to the forbidden set $X(1)$ and update the degree sequence ${\mathbf d}^{'}=\{d_1-|X(1)|, d_2, \ldots, d_n\}$, set $k=k-1$, and if $i$ has edges left, repeat from I.1. 

\end{itemize}
\item[II.] Create the set $A(\mathbf{d})$ of all adjacency sets of node $1$ that are colexicographically smaller than $A_R(1)$ and preserve graphicality, i.e.,
\begin{eqnarray*}
A(\mathbf{d}) = \left\{ A(1)= \{a_1, \ldots, d_{d_1}\}, a_i \in \V \, | \, {A(1)} <_{CL} A_R(1), {\mathbf d}^{'}|_{A(1)}\; \mbox{is graphical} \right\}. 
\nonumber
\end{eqnarray*}
where the operation $<_{CL}$ means a colexicographic order between two sets.
\item[III.]{For every $A(1) \in A(\mathbf{d})$ create all graphs from the corresponding graph realization of
$\mathbf{d}^{'}|_{A(1)}$
using this algorithm, where $\mathbf{d}^{'}_{A(1)}$ is the sequence reduced by $A(1)$.}
\end{itemize}
\caption{Degree Based Graph Construction}
\label{degree_graph_const}
\end{algorithm}
%
When constructing $A(\mathbf{d})$, checking graphicality is only needed for those
adjacency sets which are incomparable by the ordering relationship to any of the current
elements of $A(\mathbf{d})$; for the remaining sets graphicality is guaranteed by Lemma~\ref{lemma4}.

The total number of graphs that the above algorithm produces is $\Pi_{i} (d_{i}!)$.
Of course, this procedure makes sense for degree sequences
$\mathbf{d}$ for which there is only a small number of labeled
graphs realizing it.
An upper bound on the worst case
complexity $C_{\mathbf{d}}$ of the algorithm for constructing a
sample from a given degree sequence $\mathbf{d}$ is
$C_{\mathbf{d}} \leq {\cal O}(n |\E|)$, with $|\E|$ being the
number of edges in the graph.
For simple connected graphs, the maximum possible
number of edges is ${\cal O}(n^2)$, and the minimum possible number is ${\cal O}(n)$.
If ${\cal O}(|\E|)={\cal O}(n)$, then $C_{\mathbf{d}} \leq {\cal O}(n^2)$, and
if ${\cal O}(|\E|)={\cal O}(n^2)$, then $C_{\mathbf{d}}\leq {\cal O}(n^3)$, which
is an upper bound, independent of the degree sequence~\cite{Genio+Kim+Toro+Bassler+2010}.
An
algorithm to realize 
graphical degree sequences of directed graphs has been
studied in \cite{Erdos+Miklos+Toro+2010}
which can be use to extend these results to simple directed graphs.
%
%
%
%
%
%

Our sieve method for confining our search for the
underlying graph following the system identification of \S~\ref{system_ID},
thus involves:
\begin{enumerate}
\item perform an integer partition on the value $s$ (\ref{s_formulation}), keeping the partitions that lead to
a graphical sequence; let $\bf{G}$ denote the set of all graphs that remain after this first stage of the sieve.
\item construct a candidate connected graph in $\bf {G}$  that is consistent with the matrix $\widetilde{C}\widetilde{A}\widetilde{B}$ and satisfy the given degree sequence,
%
%
%
\item compare the Laplacian eigenvalues of the constructed graphs
with the roots of the characteristic polynomial (\ref{charactristic_en}) and discard
the inconsistent graphs.
\end{enumerate}
The candidate graphs for the underlying network topology are now
among the ones that remain after this three step sieve. 
We note that the sieve method is guaranteed to reduce the search space for the graph structure
by at least a factor of $2^{n}$, a bound that is obtained from the bound on the number
of permissible degree-based integer partitioning facilitated by the network system identification.
%
An example for this procedure is given next.
%
\section{An Example}

Our goal in this example is to gather information
on the graph $\G$ shown in Figure \ref{simple_graph1}(a) using the system identification procedure.
%
Using nodes $1$, $2$, and $3$ as the input-output nodes
in
(\ref{excited_agreement}),
we obtain
$\P_{\G}(s)=s^6+220s^5+190s^4+804s^3 +1664s^2+1344s$.
Since the polynomial $\P_{G}(s)$ has just one zero root,
the underlying graph is connected. Moreover, the graph is not a tree due to the
fact that $a_{n-1}\neq 6$. The graph has $11$ edges and $224$ spanning trees.
We also obtain the estimation matrices $\widetilde{A}, \ \widetilde{B}$, and $\widetilde{C}$ to be
\begin{eqnarray}
\widetilde{A} &=& \left[ \begin{array}{cccccc}
   -2.0913 &   9.5928  &  -3.2434  &  -1.9321 &  -8.3119 &  -3.6056 \\
   -0.7817 &  -2.6237 &  -2.3217 &  -3.3284 &  -2.1037  &  0.8747\\
    0.2080  & -0.1114  & -3.4715  &  0.8443  &  0.0218  & -0.2108 \\
   -0.3304 &  -0.1269  &  0.5219  & -3.1500  &  0.0999  &  0.3707 \\
   -0.6406 &   2.3385 &  -3.4079 &  -4.7957  & -7.1871  &  0.6395 \\
   -1.8704  &  9.1263  & -2.8730 &  -1.3239 &  -7.4520  & -3.4763\\
 \end{array} \right] \nonumber\\
\widetilde{B} &=& \left[ \begin{array}{ccc}
   -9.7903 & 13.7604  & -4.0211 \\
    1.6650  &  0.0982  & -1.6752\\
    0.6348  &  0.1419  & -0.3147\\
    0.4699  & -0.3308 &  -0.4116\\
    0.3430   & 2.7623  & -3.1201\\
   -9.3019 & 12.8359 &  -3.5526\\
 \end{array} \right] \nonumber\\
\widetilde{C} &=& \left[ \begin{array}{cccccc}
  -28.1542 &   2.3826  &  0.3547 &   0.8880 &   0.6434 &  30.0441 \\
  -28.2311 &   2.0005 &   0.0729 &   0.2781  &  0.9634  & 30.1259\\
  -28.2455  &  2.5397 &  -0.5074 &  -0.5056  &  0.5157  & 30.1419\\
 \end{array} \right]. \nonumber
\end{eqnarray}
And also
\begin{equation}
\widetilde{C}\widetilde{A}\widetilde{B} =\left[ \begin{array}{ccc}

   -3.0000  &  1.0000 &  -0.0000 \\
    1.0000  & -4.0000  &  1.0000\\
    0.0000  &  1.0000 & -3.0000\\
 \end{array} \right]. \nonumber
\end{equation}
Since the diagonal of the matrix $\widetilde{C}\widetilde{A}\widetilde{B}$
is $[\, -3, -4, -3 \, ]^{T}$,
%
$d_1=3$, $d_2=4$, and $d_3=3$.
In the meantime, the sum of degrees of the remaining nodes is $12$.
The possible integer partitions for the remaining three nodes such that the sum is $12$
and each degree is less than $6$ will be $\{5,5,2 \}$, $\{5, 4, 3 \}$, and $\{4, 4, 4 \}$.
Therefore, the set of the possible degrees sequences is comprised of
$\{3, 4, 3, 5, 5, 2\}$, $\{3, 4, 3, 5, 4, 3\}$, and $\{3, 4, 3, 4, 4, 4\}$.
According to the Theorem \ref{simple_graph_cond}, three set of integer partitioning are realization of some graphs.

%
Next, we construct the graphs with the three candidate degree sequences by
implementing the algorithm in \cite{kim+zoltan+e.a._09} and constructing
all connected graphs consistent with $\widetilde{C}\widetilde{A}\widetilde{B}$
%
, and with degree sequences $\{3,4,3,5,5,2\}$, $\{3, 4, 3, 5, 4, 3\}$, and $\{3,4,3,4,4,4 \}$.
The first degree sequence, $\{3,4,3,5,5,2\}$, and the degree sequence $\{3, 4, 3, 5, 4, 3\}$ are realizations of two graphs while the third degree sequence is a realization of two graphs. All
these graphs satisfy the constraint imposed by $\widetilde{C}\widetilde{A}\widetilde{B}$.
%
%
By comparing the Laplacian spectra for the corresponding
$4$ graphs with the roots of the identified characteristic polynomial,
%
we can thereby identify the original graph.
%
Three candidates for the constructed graph
are depicted in Fig.~\ref{simple_graph1}(b)-(d).
%
\section{Conclusion}
In this paper, we introduced a network identification
scheme which involves the excitation and observation
of nodes running consensus-type coordination protocols.
Starting with the number of vertices in the network as a known parameter, as well as the
controllability and observability of the resulting steered-and-observed network,
the proposed procedure strives to collect pertinent information on the topology of the underlying graph.
In this direction, we examined the applications of spectral characterization of graphs as well as a sieve method
that is based on integer partitioning algorithms and feasible graphical sequences.
%
\vspace{-.1in}
\bibliographystyle{IEEEtran}
\bibliography{myref}

\newpage

\begin{figure}[!t]
\begin{center}
\scalebox{1.1}{\includegraphics{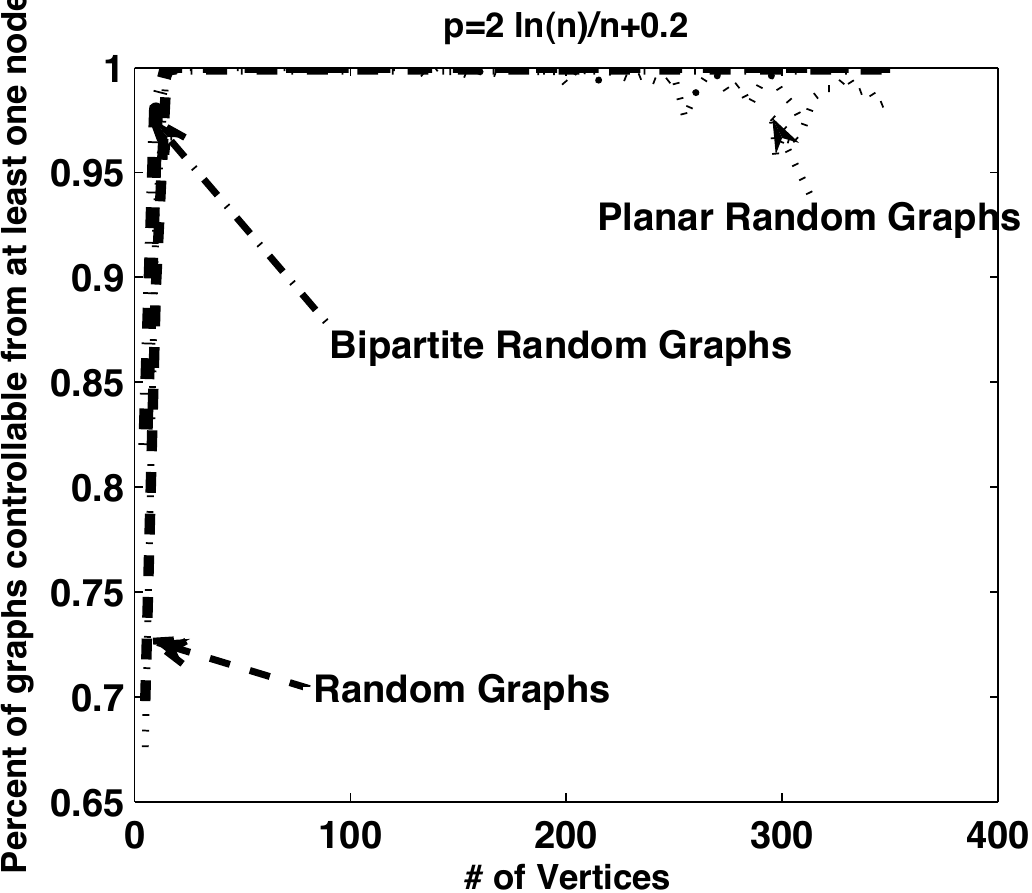}}
\caption{The percentage of random planar graphs that are controllable from at least one node} \label{input_output_exm}
\end{center}
\end{figure}

\clearpage

\begin{figure}[!t]
\centering
\subfigure[]{
   \scalebox{0.7}{\includegraphics{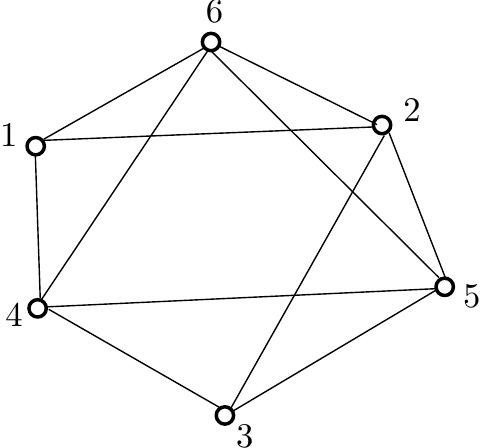}}}
\subfigure[]{
   \scalebox{.7}{\includegraphics{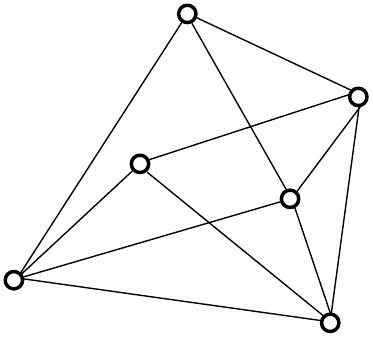}}}
\subfigure[]{
   \scalebox{.78}{\includegraphics{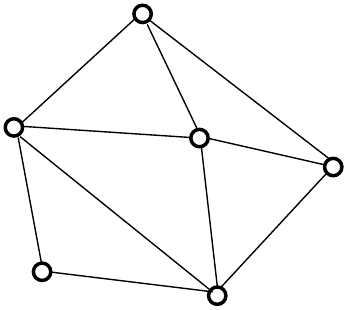}}}
\subfigure[]{
   \scalebox{.82}{\includegraphics{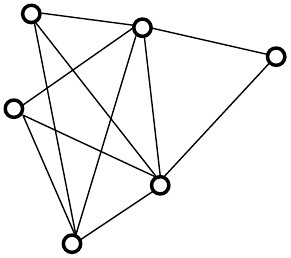}}}
   \caption{(a) a simple graph on 6 nodes, (b) a candidate graph
     constructed with degree sequence $\{3,4,3,4,4,4\}$, (c) a
     candidate graph constructed with degree sequence
     $\{3,4,3,5,4,3\}$, (d) a candidate graph constructed with degree sequence
     $\{3,4,3,5,5,2\}$.}  \label{simple_graph1}
\end{figure}
\newpage

\end{document}